\newtheorem{Proposition}{Proposition}
\newtheorem{Theorem}{Theorem}
\newtheorem{Corollary}{Corollary}
\newcommand{\lcs}[1]{\gamma_{#1}(G)}
\newcommand{\UT}[2]{UT_{#1}(#2)}
\newcommand{\Heisen}[2]{H_{#1}(#2)}
\title{A saturation theorem for submonoids of nilpotent groups and the Identity Problem}
\author{Doron Shafrir \\
{\small doron.abc@gmail.com}}
\date{11 February 2024}
\begin{document}

\maketitle

\begin{abstract}
If $M$ is a submonoid of a f.g.\@ nilpotent group $G$, and $MG'$ is a finite index subgroup of $G$, then $M$ itself is a finite index subgroup of $G$. If $MG'=G$, then $M=G$. This generalizes a well-known theorem for subgroups of f.g.\@ nilpotent groups. As a result, we give an algorithm for the identity problem in nilpotent groups.
\end{abstract}

\section{Introduction}
Given upper unitriangular matrices $B_1,...,B_n\in\UT{n}{\mathbb{Z}}$, is there a product of the $B_i$'s (allowing for repetitions) that equals the identity matrix $I_n$? In this work we present an efficient algorithm for this problem.
More generally, the identity problem for a f.g.\@ group $G$ is the following decision problem: Given a finite subset $A\subset G$, is  $e_G\in A^+$? The elements of $A$ are given as words in a fixed generating set $X$ of $G$. We show an algorithm for this problem when $G$ is nilpotent. The matrix version is a special case, since any $A\in\UT{n}{\mathbb{Z}}$ can be easily expressed as a word in the generating set of the elementary matrices $X=\{e_{ij}\mid i<j\}\subset \UT{n}{\mathbb{Z}}$\footnote{ The matrices $e_{ij}$  form a Mal’cev basis of $\UT{n}{\mathbb{Z}}$ \cite[Lemma 7.2]{clement2017theory}. The word representation will use similar space to the matrix representation, as long as compressed representation is used \cite{macdonald2015logspace}}

\subsection{A saturation theorem}
It is well-known that if $G$ is a f.g.\@ nilpotent group, and $H\le G$ is a subgroup such that $HG'$ is a finite index subgroup of $G$, then in fact $H$ itself is a finite index subgroup of $G$. Also, if $HG'=G$, then $H=G$. We show that the same holds even if $H$ is a submonoid and not assumed to be closed under inverses a-priori.

\subsection{Related results}
Bell and Potapov \cite{bell2010undecidability} showed that the Identity problem is undecidable for $SL_4(\mathbb{Z})$. Ko, Niskanen, and Potapov \cite{ko_et_al} showed it's decidable for the Heisenberg groups $\Heisen{n}{\mathbb{Q}}$. Dong \cite{dongUT4} proved decidability for $\UT{4}{\mathbb{Q}}$, and extended it to nilpotent groups of class at most 10 \cite{dong2024boundedclass}, with assistance of computer algebra software. The Identity Problem is contained in the Submonoid Memebership Problem, which Romankov showed to be undecidable for nilpotent groups of class 2 \cite{romankov2022undecidability}. While preparing this paper, a similar saturation theorem was announced in an arXiv preprint by Bodart \cite[Proposition 2.5]{bodart2024membership}, to appear in a future joint paper by Bodart, Ciobanu, and Metcalf\footnote{I have wrote to Markus Lohrey and Ruiwen Dong a description of the algorithm and Corollary~\ref{corr:saturation} on December 7th, 2023. I was informed by Markus Lohrey about Bodart's preprint when it was uploaded.}.

\section{Notations} \label{sec:notations}
Given a group $G$, the lower central series of $G$ is defined by $\lcs{0}=G$, $\lcs{n+1}=[G,\lcs{n}]$. We also use $G'=\lcs{1}=[G,G]$.  $G$ is nilpotent of class $c$ if $\lcs{c}=\{e_G\}$ but $\lcs{c-1}\ne\{e_G\}$.
Given a set $A\subseteq G$, we denote $\langle A\rangle,A^+,A^*$ the subgroup, subsemigroup and submonoid generated by $A$ respectively.
If we write $v>\alpha$ when $v\in\mathbb{R}^n,\alpha\in\mathbb{R}$ , we mean inequality at every coordinate.
In the algorithm, If $A,B$ are finite sets, we identify $v\in \mathbb{R}^A=\{v:A\rightarrow\mathbb{R}\}$ with vectors $\mathbb{R}^{|A|}$, and $M\in\mathbb{R}^{A\times B}$ with  $|A|\times|B|$ matrices, where enumerations of $A,B$ are implicitly assumed. The product $Mv\in\mathbb{R}^B$ is also defined naturally.
If $A=\{a_1,\ldots,a_n\}, I\subseteq\{1,\ldots,n\}$ then $A_I=\{a_i\mid i\in I\}$.

\section{A saturation theorem for submonoids of nilpotent groups}
We first recall that the commutator map is bilinear when the result is in the center of the group:
\begin{Proposition}
\label{prop:bilinear}
if $x_1,x_2,y\in G$ and $[x_1,y]\in Z(G)$ then $[x_1x_2,y]=[x_1,y][x_2,y]$ and $[y,x_1x_2]=[y,x_1][y,x_2]$
\end{Proposition}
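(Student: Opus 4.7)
The plan is to invoke two standard commutator identities, valid in every group, together with the centrality hypothesis. Concretely, for all $a,b,c \in G$ one has
\[
[ab, c] = [a,c]^{b}[b,c] \quad\text{and}\quad [a, bc] = [a,c][a,b]^{c},
\]
where $g^{h} := h^{-1}gh$. These follow by direct expansion of the definition of the commutator and are independent of the nilpotency assumption.

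Specializing the first identity gives $[x_{1}x_{2}, y] = [x_{1},y]^{x_{2}}[x_{2},y]$. Since $[x_{1},y] \in Z(G)$ by hypothesis, conjugation by $x_{2}$ acts trivially on it, so $[x_{1},y]^{x_{2}} = [x_{1},y]$ and the first claim follows. For the second claim, the symmetric identity gives $[y, x_{1}x_{2}] = [y, x_{2}][y, x_{1}]^{x_{2}}$. Because $Z(G)$ is a subgroup, $[y, x_{1}] = [x_{1},y]^{-1}$ is still central, hence $[y, x_{1}]^{x_{2}} = [y, x_{1}]$; and since central elements commute with everything, $[y, x_{2}][y, x_{1}] = [y, x_{1}][y, x_{2}]$, giving the desired equality.

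If one prefers to bypass citing the general commutator identities, the same proof can be carried out by bare-hands expansion. For instance, write
\[
[x_{1}x_{2}, y] = x_{2}^{-1}\bigl(x_{1}^{-1}y^{-1}x_{1}y\bigr)\bigl(y^{-1}x_{2}y\bigr) = x_{2}^{-1}[x_{1},y]\bigl(y^{-1}x_{2}y\bigr),
\]
and then use centrality of $[x_{1},y]$ to move it past $x_{2}^{-1}$, collecting $[x_{2},y]$ at the right. The second identity is handled analogously. There is no genuine obstacle here; the only thing to watch is consistency with the author's convention for $[a,b]$ and the order in which the factors appear.
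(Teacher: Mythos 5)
Your proof is correct and matches the paper's argument: the paper's own proof is exactly your ``bare-hands'' expansion (insert $yy^{-1}$, recognize $[x_1,y]$ in the middle, and use centrality to slide it to the left), and your first version merely packages that same computation as the standard identity $[ab,c]=[a,c]^{b}[b,c]$. The commutator convention you need, $[a,b]=a^{-1}b^{-1}ab$, is indeed the one the paper uses, so everything checks out.
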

\begin{proof}
\begin{gather*}
[x_1x_2,y] = (x_1x_2)^{-1}y^{-1}x_1x_2y=x_2^{-1}x_1^{-1}y^{-1}x_1(yy^{-1})x_2y=\\
=x_2^{-1}[x_1,y]y^{-1}x_2y=[x_1,y]x_2^{-1}y^{-1}x_2y=[x_1,y][x_2,y]
\end{gather*}
The second identity can be proven like the first, or alternatively by using $[y,x]=[x,y]^{-1}$.
\end{proof}
As a corollary, if $G$ is nilpotent of class $c$, the commutator map $G\times\lcs{c-2}\rightarrow\lcs{c-1}$ is bilinear, and its image generates $\lcs{c-1}$ by definition of the lower central series (see \cite[ Theorem 2.15]{clement2017theory}). We now recall the proof of the well-known result for groups, which we will generalize.
\begin{Theorem}\label{thm:groups}
Let $G$ be a finitely generated nilpotent group, $H\le G$ a subgroup such that $[G:HG']<\infty$. Then, $[G:H]<\infty$. Moreover, if $HG'=G$ then $H=G$.
\end{Theorem}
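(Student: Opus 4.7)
The plan is induction on the nilpotency class $c$ of $G$. When $c=1$ we have $G'=\{e_G\}$ and the claim is trivial. For $c\ge 2$, I would set $Z=\lcs{c-1}$, which is central, so that $\bar G:=G/Z$ has class at most $c-1$. The image of $HG'$ in $\bar G$ is $\bar H\bar G'$, which has finite index in $\bar G$, and the inductive hypothesis gives $[\bar G:\bar H]<\infty$, i.e.\ $[G:HZ]<\infty$. Since $Z$ is central, $H$ is normal in $HZ$ with $HZ/H\cong Z/(H\cap Z)$, so it remains to show that $H\cap Z$ has finite index in the finitely generated abelian group $Z$.

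For this I would exploit the bilinear commutator map $G\times\lcs{c-2}\to\lcs{c-1}=Z$ mentioned after Proposition~\ref{prop:bilinear}, whose image generates $Z$. Setting $N=[G:HZ]$, for every $g\in G$ and $x\in\lcs{c-2}$ we have $g^N,\,x^N\in HZ$, say $g^N=h_1z_1$ and $x^N=h_2z_2$ with $h_i\in H$ and $z_i\in Z$. Centrality of $Z$ makes $[g^N,x^N]=[h_1,h_2]\in H$, while bilinearity gives $[g^N,x^N]=[g,x]^{N^2}$. Writing $Z$ additively, every $z\in Z$ is a $\mathbb{Z}$-linear combination of commutators $[g,x]$, so $N^2 z$ is the corresponding combination of the elements $[g,x]^{N^2}\in H\cap Z$. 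Hence $N^2Z\subseteq H\cap Z$, and since $Z/N^2Z$ is finite, $H\cap Z$ has finite index in $Z$, closing the induction.

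For the moreover part, if $HG'=G$ then the inductive step forces $\bar H=\bar G$, i.e.\ $HZ=G$, so $N=1$ and the computation above yields $[g,x]\in H$ with no power needed. These commutators generate $Z$, so $Z\subseteq H$ and therefore $H=HZ=G$. The only real subtlety, and the main thing to spot up front, is that one should reduce modulo the last nontrivial term $\lcs{c-1}$ of the lower central series rather than modulo $G'$: that is exactly what makes the commutator map of Proposition~\ref{prop:bilinear} bilinear into a central subgroup and lets the central factors $z_i$ disappear in the identity $[g^N,x^N]=[g,x]^{N^2}$, which is the step that will have to be upgraded when $H$ is only a submonoid.
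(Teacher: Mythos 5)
Your proof is correct and follows essentially the same route as the paper: induct on the nilpotency class, quotient by $Z=\lcs{c-1}$, and use centrality plus the bilinearity of the commutator map $G\times\lcs{c-2}\to Z$ to get $[g,x]^{N^2}\in H$ and hence $N^2Z\subseteq H\cap Z$, with the $HG'=G$ case handled by taking the exponent to be $1$. The one small point to tidy is your specific choice $N=[G:HZ]$: since $HZ$ need not be normal, $g^N\in HZ$ is not automatic and needs a short extra argument (it does hold in nilpotent groups, via the normal core), whereas the paper sidesteps this by taking \emph{some} $e$ with $G^e\subseteq HZ$, e.g.\ the index of the normal core of $HZ$.
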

\begin{proof}
We use induction on the nilpotency class $c$ of $G$. If $c=1$, $G'=\{1\}$ and there is nothing to prove. Now, assume $c>1$. We define $Z=\lcs{c-1}\subseteq Z(G)$. By the induction hypothesis on $HZ/Z$ and  $G/Z$,  we have $[G/Z:HZ/Z]=[G:HZ]<\infty$, so there is some $e$ such that $G^e\subseteq HZ$. Let $x\in G,y\in \lcs{c-2}$. We have $x^e,y^e\in HZ$, so there are $a,b\in Z$ such that $x^ea,y^eb\in H$. We have $[x^ea,y^eb]\in H$ and also $$[x^ea,y^eb]=[x^e,y^e][x^e,b][a,x^e][a,b]=[x^e,y^e]=[x,y]^{e^2}$$
Where we used $a,b,[x,y]\in Z(G)$ and Proposition~\ref{prop:bilinear}. Now, since by definition $Z=[G,\lcs{c-2}]$, we have $Z^{e^2}\subseteq H$.
Since $Z$ is a finitely generated Abelian group, we have $[Z:H\cap Z]<\infty$. Finally, $[G:H]=[G:HZ][HZ:H]=[G:HZ][Z:H\cap Z]<\infty$.

For the second statement, note that if $HG'=1$, we can use $e=1$ throughout the proof, and get $G=HZ,Z\subseteq H$ which implies $H=G$.
\end{proof}
\begin{Proposition}\label{prop:grpgrp}
Assume $G$ is a group, $N\triangleleft G$ a normal subgroup, and $M\le G$  a submonoid of $G$ such that both $M\cap N$ and  $MN$ are closed under inverses, that is, they are both groups. Then, $M$ is also a subgroup of $G$. 
\end{Proposition}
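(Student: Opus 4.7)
The goal is to show that any $m\in M$ has its inverse in $M$. The plan is to use the hypothesis on $MN$ to produce a convenient expression for $m^{-1}$, and then use the hypothesis on $M\cap N$ to actually realize $m^{-1}$ as a product of elements of $M$.

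First I would take $m\in M$ and use that $MN$ is a group: since $m\in MN$, we get $m^{-1}\in MN$, so we can write $m^{-1}=an$ with $a\in M$ and $n\in N$. Rearranging, $ma=n^{-1}$, which lies in $N$ because $N$ is a group. On the other hand $ma\in M$ by closure of $M$ under multiplication, so $ma\in M\cap N$.

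Next, since $M\cap N$ is assumed to be a group, $ma$ has an inverse within $M\cap N$; call it $b$. In $G$ we then have $b=(ma)^{-1}$ and $b\in M$. Now I would compute $a\cdot b = a\cdot(ma)^{-1}=a\cdot a^{-1}m^{-1}=m^{-1}$ in $G$, while on the other hand $a\cdot b\in M$ since both factors lie in $M$. This forces $m^{-1}\in M$, completing the proof.

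There is no real obstacle; the proof is just a matter of arranging the two hypotheses in the right order. The one thing to be careful about is not to use ``inverses in $M$'' until after we have established them: the only places inverses legitimately appear are inside $N$ (which is a group from the start), inside $MN$ (group by assumption), and inside $M\cap N$ (group by assumption). The key observation, namely that an arbitrary decomposition $m^{-1}=an$ automatically places $ma$ into the intersection $M\cap N$, is what links the two assumptions together.
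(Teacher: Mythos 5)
Your proof is correct and is essentially the same as the paper's: both decompose $m^{-1}$ using the group $MN$, observe that the resulting "error term" lands in $M\cap N$, and then invert it there to recover $m^{-1}$ inside $M$. The only difference is notational (the paper writes $m^{-1}n\in M$ directly rather than $m^{-1}=an$), so no further comparison is needed.
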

\begin{proof}
Let $m\in M$, we need to prove $m^{-1}\in M$. Since $MN$ is a group, for some $n\in N$ we have $m^{-1}n\in M$.
Therefore $n=m(m^{-1}n)\in M\cap N$, so also $n^{-1}\in M$ by assumption on $M\cap N$, so $m^{-1}=(m^{-1}n)n^{-1}\in M$ as needed.
\end{proof}
The following result is at the heart of our proof:
\begin{Proposition}
\label{prop:main}
Let $G$ be any group, $M\le G$ a submonoid,  $z=[x,y]\in Z(G)$. If $M\langle z\rangle$ is a finite index subgroup of $G$ then $M$ itself is a finite index subgroup of $G$.
\end{Proposition}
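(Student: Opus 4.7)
The plan is to show that $M \cap \langle z \rangle$ is a finite-index subgroup of $\langle z\rangle$ and then invoke Proposition~\ref{prop:grpgrp} with the normal subgroup $\langle z\rangle \triangleleft M\langle z\rangle$. Set $K := [G : M\langle z\rangle]$; since $M\langle z\rangle$ is a group of finite index $K$, the powers $x^{\pm K}, y^{\pm K}$ all lie in $M\langle z\rangle$, so I can write
$$x^K = p z^a,\quad x^{-K} = q z^b,\quad y^K = r z^c,\quad y^{-K} = s z^d,$$
for some $p, q, r, s \in M$ and $a, b, c, d \in \mathbb{Z}$.

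The crux is a commutator computation. Since $z = [x,y]$ is central, Proposition~\ref{prop:bilinear} gives $[x^{\epsilon k K}, y^{\delta l K}] = z^{\epsilon \delta k l K^2}$ for any $\epsilon, \delta \in \{\pm 1\}$ and $k, l \ge 1$. Expanding the left side using $x^{kK} = p^k z^{ka}$ etc.\ (valid since $z$ is central) and collecting all central factors produces, for every $k, l \ge 1$, identities of the form
$$q^k s^l p^k r^l = z^{\,k l K^2 - k(a+b) - l(c+d)} \in M,\qquad p^k s^l q^k r^l = z^{-k l K^2 - k(a+b) - l(c+d)} \in M.$$
Thus the additive submonoid $S := \{n \in \mathbb{Z} : z^n \in M\}$ contains arbitrarily large positive and arbitrarily large negative integers.

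A standard elementary fact now gives that any submonoid of $(\mathbb{Z},+)$ containing elements of both signs is a subgroup: for positive $s_+$ and negative $s_-$ in $S$, setting $d = \gcd(s_+, -s_-)$, the identity $\bigl(\tfrac{-s_-}{d} - 1\bigr) s_+ + \tfrac{s_+}{d}\, s_- = -s_+$ exhibits $-s_+$ as a non-negative integer combination of $s_+$ and $s_-$. Hence $S$ is a subgroup of $\mathbb{Z}$, and it is nontrivial since the two displayed exponents cannot vanish simultaneously for every $k, l$ (as $K^2 \neq 0$). Therefore $M \cap \langle z\rangle$ has finite index in $\langle z\rangle$.

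Finally, $\langle z\rangle$ is central in $G$ and hence normal in $M\langle z\rangle$; Proposition~\ref{prop:grpgrp} applied with ambient group $M\langle z\rangle$ and $N := \langle z\rangle$ yields that $M$ is a subgroup of $M\langle z\rangle$. Combining indices,
$$[G : M] = [G : M\langle z\rangle]\cdot[M\langle z\rangle : M] \le K \cdot [\langle z\rangle : M \cap \langle z\rangle] < \infty.$$
The main obstacle is the commutator computation producing powers of $z$ in $M$ of both signs; once that is in hand, everything else is bookkeeping via Proposition~\ref{prop:grpgrp} and elementary arithmetic in $\mathbb{Z}$.
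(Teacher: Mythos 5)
Your proof is correct up to one minor fix and takes essentially the same route as the paper: reduce to showing $M\cap\langle z\rangle$ has finite index in $\langle z\rangle$ and conclude via Proposition~\ref{prop:grpgrp}, then obtain arbitrarily large positive \emph{and} negative powers of $z$ in $M$ by multiplying $M$-approximations of $x^{\pm N},y^{\pm N}$ in two different orders, so that the quadratic commutator term $z^{\pm N^2K^2}$ dominates the linear error terms coming from the $z$-offsets. The one slip is the claim that $x^{\pm K},y^{\pm K}\in M\langle z\rangle$ because the index is $K$ --- that implication requires $M\langle z\rangle$ to be normal in $G$; simply take any exponent $K\ge 1$ with $x^{K},y^{K}\in M\langle z\rangle$ (such exists since $[\langle x\rangle:\langle x\rangle\cap M\langle z\rangle]\le[G:M\langle z\rangle]<\infty$, and the negative powers then come for free because $M\langle z\rangle$ is a group), after which your computation goes through verbatim.
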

\begin{proof}
Denote $Z=\langle z\rangle$. Since $MZ$ is a finite index subgroup of $G$, by Proposition~\ref{prop:grpgrp} it is enough to prove $M\cap Z$ is a finite index subgroup of $Z$. If $Z$ is finite, this is trivial, so assume $Z\simeq\mathbb{Z}$. Since $[G:MZ]<\infty$  there is $e\in\mathbb{N}$ such that $x^e,y^e\in MZ$, so for some $a,b\in Z$ we have $x^ea,y^eb\in M$.  By Proposition~\ref{prop:bilinear}, using $a,b,[x,y]\in Z(G)$, 
$$[x^ea,y^eb]=[x,y]^{e^2}[x^e,b][a,x^e][a,b]=z^{e^2}$$
Replacing $x,y$ with $x^ea,y^eb$ we can now assume $x,y\in M$ and $[x,y]=z^{e^2}$.
Since $MZ$ is a subgroup of $G$ and therefore closed under inverses, for some  $i,j\in\mathbb{Z}$ we have $x^{-1}z^i,y^{-1}z^j\in M$.
The elements $x^{-1}z^i,y^{-1}z^j$ can be thought of as approximation to $x^{-1},y^{-1}$ inside $M$. Since $x,y,x^{-1}z^i,y^{-1}z^j$ are all in $M$, for every $N\in\mathbb{N}$,
$$(x^{-1}z^i)^N(y^{-1}z^j)^Nx^Ny^N\in M$$
On the other hand, using Proposition~\ref{prop:bilinear} and $z\in Z(G)$,
$$(x^{-1}z^i)^N(y^{-1}z^j)^Nx^Ny^N=z^{(i+j)N}[x^N,y^N]=z^{(i+j)N}[x,y]^{N^2}=z^{(i+j)N+e^2N^2}$$
It is clear that for large enough $N$,  $(i+j)N+e^2N^2>0$. 
Similarly, $$(y^{-1}z^j)^N(x^{-1}z^i)^Ny^Nx^N=z^{(i+j)N-e^2N^2}\in M$$
And the exponent in negative for large enough $N$. Now, $M\cap Z$ is a submonoid of $Z$ that includes both positive and negative powers of $z$, therefore, it is a finite index subgroup of $Z$, as needed.
\end{proof}

We can now state our main theorem:
\begin{Theorem}
\label{thm:main}
Assume $G$ is a finitely generated nilpotent group and $M\le G$ a submonoid of $G$. If $MG'$ is a finite index subgroup of $G$, so is $M$ itself.
\end{Theorem}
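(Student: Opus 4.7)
The plan is to induct on the nilpotency class $c$ of $G$. In the base case $c=1$, $G'=\{e_G\}$ so the hypothesis already gives $M=MG'$ is a finite index subgroup, and there is nothing to do. For the inductive step I would peel the top of the lower central series off in two stages: first pass to $G/Z$ with $Z=\lcs{c-1}$ to obtain that $MZ$ is a finite index subgroup of $G$, and then strip $Z$ itself off one central commutator at a time by iterating Proposition~\ref{prop:main}.

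For the first stage, let $Z=\lcs{c-1}\subseteq Z(G)$; since $c\ge 2$ we also have $Z\subseteq G'$, so $MG'/Z=(MZ/Z)(G/Z)'$ has finite index in $G/Z$. The quotient $G/Z$ is nilpotent of class $c-1$, so by the inductive hypothesis applied to the submonoid $MZ/Z\le G/Z$, the image $MZ/Z$ is a finite index subgroup of $G/Z$, which lifts to $MZ$ being a finite index subgroup of $G$.

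For the second stage, observe that $Z$ is f.g.\@ abelian (being a subgroup of a f.g.\@ nilpotent group) and, by definition of the lower central series, generated by commutators $[x,y]$ with $x\in G$ and $y\in\lcs{c-2}$. Hence a finite list $z_1=[x_1,y_1],\ldots,z_n=[x_n,y_n]\in Z\subseteq Z(G)$ of such commutators generates $Z$. Set $M^{(k)}:=M\langle z_{k+1},\ldots,z_n\rangle$ for $k=0,\ldots,n$, so that $M^{(0)}=MZ$ and $M^{(n)}=M$, and each $M^{(k)}$ is a submonoid because the adjoined elements are central. I would prove by a short secondary induction on $k$ that every $M^{(k)}$ is a finite index subgroup of $G$: the case $k=0$ is the first stage, and the step from $k-1$ to $k$ is exactly Proposition~\ref{prop:main} applied to the submonoid $M^{(k)}$ and the central commutator $z_k$, since $M^{(k)}\langle z_k\rangle=M^{(k-1)}$. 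Taking $k=n$ gives the theorem.

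The real content has already been packaged into Proposition~\ref{prop:main}; the only mild technicality is writing $Z$ as the subgroup generated by \emph{finitely many} commutators with second entry in $\lcs{c-2}$, which is where the finite generation of subgroups of f.g.\@ nilpotent groups is invoked. I do not foresee any other obstacles beyond this bookkeeping.
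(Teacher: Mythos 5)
Your proposal is correct and takes essentially the same route as the paper: induct on the nilpotency class, apply the inductive hypothesis to $G/\lcs{c-1}$ to conclude that $MZ$ has finite index, then strip off a finite set of central commutator generators of $Z=\lcs{c-1}$ one at a time via Proposition~\ref{prop:main}. The only differences are cosmetic (your indexing of the intermediate submonoids runs in the opposite direction, and you spell out the quotient step that the paper leaves implicit).
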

\begin{proof}
We use induction on the nilpotency class $c$ of $G$. For $c=1$ there is nothing to prove. Assume $c>1$. Define $Z=\lcs{c-1}$. By induction, $MZ$ is a finite index subgroup of $G$. We choose $z_1,z_2,\ldots,z_n\in Z\cap\{[x,y]\mid x\in G,y\in\lcs{c-2}\} $ generating $Z$. Note that we make sure all $z_i$ are commutators for Proposition~\ref{prop:main} to be applicable. Set$$M_0=M,M_1=M\langle z_1\rangle, M_2=M\langle z_1, z_2\rangle,\ldots M_n=M\langle z_1,\ldots z_n\rangle=MZ$$
we have $M_i\langle z_{i+1}\rangle=M_{i+1}$. Since $M_n=MZ=M_{n-1}\langle z_n\rangle$ is a subgroup of finite index of $G$, by Proposition~\ref{prop:main} so is $M_{n-1}=M_{n-2}\langle z_{n-1}\rangle$, and continuing this way so are $M_{n-2},M_{n-3}...$ until we reach $M_0=M$.
\end{proof}
\begin{Corollary}\label{corr:saturation}
Let $G$ be f.g.\@ and nilpotent, $M\le G$ a submonoid, and $MG'=G$. Then $M=G$.
\end{Corollary}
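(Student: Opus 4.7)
The plan is to combine the two theorems already established in the section, with no new technical ideas required. First, observe that the hypothesis $MG' = G$ is a very strong special case of the hypothesis of Theorem~\ref{thm:main}: namely, $MG'$ is a finite-index subgroup of $G$ of index one. So Theorem~\ref{thm:main} applies directly and gives that $M$ itself is a finite-index subgroup of $G$. The crucial consequence for our purposes is not the finite index, but the upgrade of $M$ from a submonoid to an actual subgroup of $G$, i.e., closed under inverses.

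Once $M$ is known to be a subgroup, the ``moreover'' clause of Theorem~\ref{thm:groups} applies verbatim: if $H \le G$ is a \emph{subgroup} of a f.g.\@ nilpotent group satisfying $HG' = G$, then $H = G$. Taking $H = M$ yields $M = G$, which is the desired conclusion.

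The only nontrivial content sits in Theorem~\ref{thm:main}, whose proof handled what would otherwise be the main obstacle: promoting a submonoid to a subgroup in the absence of a priori inverses. Once that is in hand, the corollary reduces to a two-line deduction, and there is no serious obstacle remaining. I would therefore write the proof as a single short paragraph: apply Theorem~\ref{thm:main} to deduce that $M$ is a subgroup, then cite the group-theoretic version (Theorem~\ref{thm:groups}) to finish.
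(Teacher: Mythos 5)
Your proposal is correct and matches the paper's own proof exactly: apply Theorem~\ref{thm:main} (with $MG'=G$ being the index-one case) to upgrade $M$ to a subgroup, then invoke the second statement of Theorem~\ref{thm:groups} to conclude $M=G$. No gaps and no divergence from the paper's argument.
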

\begin{proof}
By Theorem~\ref{thm:main}, $M$ is a subgroup of $G$. Now we are in the classical case, see Theorem~\ref{thm:groups} or \cite[Theorem 7.18]{clement2017theory}.
\end{proof}
\section{The Identity Problem for nilpotent groups}
First, we need a simple result from linear programming.
\begin{Proposition}
\label{prop:LPduality}
Given $A\in\mathbb{R}^{m\times n}$ such that $$\{v\in\mathbb{R}^n\mid Av=0,\sum_{i=1}^n v_i=1,v\ge 0\}=\varnothing$$ Then, there is some $u\in\mathbb{Z}^m$ with $A^tu>0$.
\end{Proposition}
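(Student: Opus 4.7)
The hypothesis says exactly that $0 \in \mathbb{R}^m$ does not lie in the convex hull of the $n$ columns of $A$, because $\{Av \mid v \ge 0, \sum v_i = 1\}$ is precisely $\mathrm{conv}(\text{columns of }A)$. My plan is to first apply a separating-hyperplane / Farkas-type argument to produce a real separator, and then upgrade it to an integer vector by exploiting scale invariance.

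For the first step, since $C := \mathrm{conv}(\text{columns of }A)$ is a non-empty compact convex subset of $\mathbb{R}^m$ not containing the origin, the separating hyperplane theorem yields some $u \in \mathbb{R}^m$ with $\langle u, c\rangle > 0$ for every $c \in C$. Applied to each column $Ae_i$, this gives $(A^t u)_i > 0$ for $i = 1, \ldots, n$, i.e.\ $A^t u > 0$ componentwise. Equivalently, one can invoke Farkas' lemma on the augmented infeasible system $\binom{A}{\mathbf 1^t} v = \binom{0}{1}$, $v \ge 0$, which produces real $u, \alpha$ with $A^t u \le -\alpha \mathbf 1$ and $\alpha > 0$, and then $-u$ is the desired separator.

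For the integrality upgrade, observe that the set $\{u \in \mathbb{R}^m \mid A^t u > 0\}$ is an open subset of $\mathbb{R}^m$, since strict inequalities are an open condition, and it is invariant under multiplication by positive scalars. Having just shown it is non-empty, density of $\mathbb{Q}^m$ in $\mathbb{R}^m$ yields a rational point $u' \in \mathbb{Q}^m$ in this open set, and multiplying $u'$ by the least common multiple of the denominators of its coordinates produces an integer vector $u'' \in \mathbb{Z}^m$ that still satisfies $A^t u'' > 0$.

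The only ``hard'' part is picking the cleanest formulation of Farkas / separating hyperplanes; once a real separator is in hand, the passage to $\mathbb{Z}^m$ is automatic from openness plus the fact that the separator condition is a homogeneous strict inequality.
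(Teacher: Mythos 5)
Your proof is correct and follows essentially the same route as the paper: the paper applies Farkas' lemma to the augmented system $\bigl(\begin{smallmatrix}A\\ \mathbf{1}^t\end{smallmatrix}\bigr)v=\bigl(\begin{smallmatrix}0\\1\end{smallmatrix}\bigr)$, $v\ge 0$ exactly as in your ``equivalently'' remark, and then performs the identical rational-perturbation-plus-clearing-denominators step using openness of $\{u\mid A^tu>0\}$. Your lead formulation via strict separation of the origin from the compact convex hull of the columns is just the geometric face of the same fact, so there is nothing substantive to add.
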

\begin{proof}
Define $A_1=(A\mid\boldsymbol{1}^m)\in\mathbb{R}^{(m+1)\times n}$ and $b=(0,0,..,0,1)^t\in\mathbb{R}^{m+1}$. We know that $\{A_1v=b,v\ge 0\}$ has no solution, so by Farkas lemma (\cite{farkas_dax1997classroom}), there is $u_1=(u\mid \alpha)^t\in\mathbb{R}^{m+1}$ such that $A_1^tu_1=A^tu+\alpha\boldsymbol{1}^m\ge 0$ and $b^tu_1=\alpha<0$, implying  $A^tu\ge-\alpha>0$. Since $\{u\mid Au>0\}$ is open, we can perturb $u$ so $u\in\mathbb{Q}^m$. Finally, we multiply by a common denominator to get $u\in\mathbb{Z}^m$.
\end{proof}

Now we present the algorithm. We actually solve a more general problem than the identity problem: Given $A\subset G$, find  $A_{inv}=\{g\in A\mid g^{-1}\in A^*\}$. Equivalently, $A_{inv}$ is the set of $a\in A$ which are part of a word in the $a_i$'s with value $e_G$. Clearly, $e_G\in A^+$ iff $A_{inv}\ne\varnothing$. We explain the meaning of each step in the algorithm, and then prove its correctness.

\algnewcommand{\TRUE}{\textbf{true}}
\algnewcommand{\FALSE}{\textbf{false}}
\algnewcommand{\NULL}{\textbf{null}}
\newcommand{\vars}{\texttt}
\begin{algorithm}
\caption{A nilpotent group $G$ is fixed. Given $A=\{a_1,...,a_n\}\subset G$, return  $A_{inv}$}
\begin{algorithmic}[1]\label{algorithm:main}
    \Function{FindInvertibleSubset}{$\vars{A}$} 
    \State $I \gets \{1,2,...,n\}$
    \While{$\vars{I}\ne\varnothing$}
        \State $\vars{R} \gets$  \Call{SubgroupRelations}{G,$\vars{A}_\vars{I}$} \Comment{Relations for a presentation of $\langle \vars{A}_\vars{I}\rangle \le G$}
        \State $\vars{M} \gets$ $\mathbb{Z}^{|\vars{R}|\times |\vars{I}|}$  matrix of signed occurrences of each $a\in\vars{A}_\vars{I}$ in each $r\in\vars{R}$
        \State $\vars{v} \gets $ \Call{Solve}{$\{v\in\mathbb{R}^{|\vars{I}|},\vars{M}v=0, \sum v=1, v\ge 0\}$}
        \If{$\vars{v}=\NULL$} \Comment{If no solution exists}
                \State \Return $\vars{A}_\vars{I}$
            \Else
                \State $\vars{I} \gets \{i\in \vars{I}\mid \vars{v}_i=0\}$
            \EndIf
        \EndWhile
\State \Return $\varnothing$ 
\EndFunction
\end{algorithmic}
\end{algorithm}

\newcommand{\algoname}[1]{\textnormal{\textsc{#1}}}

\emph{step 4.} We use the algorithm from \cite[Theorem 3.11]{macdonald2015logspace} (see also \cite[Theorem 3.4]{baumslag1991algorithmic}): Given a finite presentation of a nilpotent group $G$ and a finite subset $A\subset G$ (given as words in $X^\pm$), the algorithm finds a finite presentation for $\langle A\rangle$ in the generators $A$. Here a presentation $G$ is fixed and "hard coded" as a parameter of $\algoname{SubgroupRelations}$, but could also be an input to $\algoname{FindInvertibleSubset}$: $\algoname{SubgroupRelations}$ and therefore $\algoname{FindInvertibleSubset}$ is polynomial time as long as the nilpotency class of $G$ is fixed. Also, one can use $\langle A_{I_{prev}}\mid R\rangle$, with $I_{prev}$ being the last value of $I$, instead of $G$ as the containing group. 

\emph{step 5.} We count the occurrences of each generator of $A_I$ in each relation of $R$, with appropriate signs. For example, if $I=\{1,7,8\}$ and $R=\{a_8^2a_7^3, a_1^3a_7^{-6}a_1^6a_7\}$, then $M=\left(\begin{smallmatrix}0 & 3 & 2\\9 & -5 & 0\end{smallmatrix}\right)$ 

\emph{step 6.}  We call a Linear Programming solver, that returns any solution if it exists, and $\NULL$ if there is no solution.

\emph{step 10.} Here we consider $\vars{v}$ as function $\vars{v}:I\rightarrow\mathbb{R}$, see Section \ref{sec:notations}.

We now prove that the algorithm returns $A_{inv}$. First we show that any $a_i$ removed in step 10 indeed satisfies $a_i^{-1}\notin A^*$.  Assume $\vars{v}\ne\NULL$. Let $F_I$ be the free group with formal generators $\{a_i\mid i\in\vars{I}\}$, and define a homomorphism $f:F_I\rightarrow\mathbb{R}$ by $f(a_i)=v_i$. Since $\vars{Mv}=0$, we have $f(r)=0$ for any $r\in\vars{R}$, so $f$ factors through the quotient map in the presentation of $\langle A_I\rangle$ and we get $f:\langle A_I\rangle\rightarrow\mathbb{R}$, and $f(a_i)\ge 0$ since $\vars{v}\ge 0$. If a word $w$ in the $ A_I$ equals $e_G$, then in particular $f(w)=0$. On the other hand $f(a_i)\ge 0$ for all $i\in I$ implies $f(w)\ge 0$, and equality holds iff $w$ does not include any $a_i$ for which $v_i>0$. Therefore removing the elements is justified. Note that $v\ne\boldsymbol{0}$ by the condition $\sum_iv_i=1$ so  $I$ gets strictly smaller.

Next, assume $\vars{v}=\NULL$, so no solution exists. By Proposition~\ref{prop:LPduality}, there is some $u\in\mathbb{Z}^{|R|}$ such that $M^tu>0$. Set $w=M^tu\in\mathbb{Z}^I$, then $w>0$ (recall we identify $\mathbb{Z}^I$ with $\mathbb{Z}^{|I|}$ implicitly). Set $H=\langle A_I\rangle\le G$. Since each $r_i$ is a relation in $H$, the product $r=r_1^{u_1}\cdots r_{|R|}^{u_{|R|}}$ is also a relation in $H$, and thus also in the Abelianization $H/H'$. But in $H/H'$ the $a_i$'s are allowed to commute, so we can group the $a_i$'s in $r$  to get $\prod_{i\in I} a_i^{w_i}\in H'$ (here we used the definition of $\vars{M}$). Since $w_i>0$, we conclude that for every $i\in I$, $a_i^{-1}\in A_I^*H'$.  Therefore $ A_I^* H'= \langle A_I\rangle H'=H\cdot H'=H$. By corollary~\ref{corr:saturation}, we have $ A_I^*=H=\langle A_I\rangle$, so $A_I=A_{inv}$.

\printbibliography

\end{document}